\newcommand{\COLORON}{0}
\newcommand{\NOTESON}{0}
\newcommand{\Debug}{0}
\newcommand{\comment}[1]{}
\newcommand{\COMMENT}[1]{}
\definecolor{darkgray}{rgb}{0.3,0.3,0.3}
\newcommand{\defi}[1]{{\color{darkgray}\emph{#1}}}
\newtheorem{proposition}{Proposition}[section]
\newtheorem{theorem}[proposition]{Theorem}
\newtheorem{lemma}[proposition]{Lemma}
\newtheorem{observation}[proposition]{Observation}
\newtheorem{conjecture}{{\color{red}Conjecture}}[section]
\newtheorem{problem}[conjecture]{{\color{red}Problem}}
\newtheorem{examp}[proposition]{Example}%[section]
\newcommand{\FIG}{0}
\newcommand{\note}[1]{ 

\hspace*{-30pt}
	{\color{blue}  NOTE: \color{Turquoise}{\small  \tt \begin{minipage}[c]{1.1\textwidth}  #1 \end{minipage} \ignorespacesafterend }} 
	
	}
\else \newcommand{\note}[1]{} \fi
\newcommand{\afsubm}[1]{ \ifnum \Debug = 1 {\mymargin{#1}}
\fi} %For notes on after-submission changes
\renewcommand{\color}[1]{}
\newcommand{\N}{\ensuremath{\mathbb N}}
\newcommand{\g}{\ensuremath{G\ }}
\newcommand{\G}{\ensuremath{G}}
\newcommand{\Prb}[1]{Problem~\ref{#1}}
\renewcommand{\iff}{if and only if}
\newcommand{\fe}{for every}
\newcommand{\rw}{random walk}
\newcommand{\labtequ}[2]{%\labtequc{#1}{#2}}
 \begin{equation} \label{#1} 	\begin{minipage}[c]{0.9\textwidth}  #2 \end{minipage} \ignorespacesafterend \end{equation} }
\newcommand{\mymargin}[1]{% <- dieses % verhindert ein ungewolltes Leerzeichen
  \marginpar{%
    \begin{minipage}{\marginparwidth}\small%
      \begin{flushleft}%
        {\color{blue}#1}%
      \end{flushleft}%
   \end{minipage}%
  }%
}%
\newcommand{\mySection}[2]{}
\title{On walk-regular graphs and graphs with symmetric hitting times}
\author{Agelos Georgakopoulos\thanks{Partly supported by FWF Grant P-24028-N18.}
\medskip 
\\
  {Mathematics Institute}\\
 {University of Warwick}\\
  {CV4 7AL, UK}\\}
\date{}
\begin{document}
\maketitle

\begin{abstract}
Aldous \cite{AldHit} asked whether every graph in which the distribution of the return time of random is independent of the starting vertex must be transitive. We remark that this question can be reduced into a purely graph-theoretic one that had already been answered Godsil \& McKay \cite{GoMaFea} and ask some questions motivated by this.
\end{abstract}

\section{Walk-regular graphs}

Aldous \cite{AldHit} posed the following problem

\begin{problem}[\cite{AldHit}] \label{prAl}
If  a graph \g satisfies
\labtequ{cond}{$Pr_x(Z(n)=x) = Pr_y(Z(n)=y)  \text{ \fe\ } x,y\in V(G),$}
 is \g necessarily vertex-transitive?
\end{problem}

Here, $Pr_x(Z(n)=x)$ denotes the probability that simple \rw\ on \g started at $x$ will be at its starting point $x$ after $n$ steps.

\comment{
Similarly, we can ask
\begin{problem} \label{prSymH}
If $H_{xy} = H_{yx}$ \fe\ $x,y\in V(G)$, is \g necessarily vertex-transitive?
\end{problem}
}

A graph satisfying condition \eqref{cond} is necessarily regular, for that condition implies that the expected return time to a vertex $x$ is independent of $x$, and it is known that this time equals $2m/d(x)$ \cite{BW2}. 

As observed by Aldous \cite{AldHit}, condition \eqref{cond} also implies that for any two vertices $x,y$, if $X$ is the (random) time it takes for \rw\ from $x$ to visit $y$ and, conversely, $Y$ is the (random) time it takes for \rw\ from $y$ to visit $x$, then $X$ and $Y$ have the same distribution. To see this, note that if $d(x)=d(y)$ and $P$ is an $x$-$y$~walk whose interior does not visit $x$ or $y$, then the probability that \rw\ from $x$ will traverse $P$ in its first $|P|$ steps equals the probability that \rw\ from $y$ will traverse $P$ in the converse direction in its first $|P|$ steps, see \cite{LyonsBook}. 

This implies a further proof of the fact that condition \eqref{cond} implies regularity: note that for any two neighours $x,y$, the probability of visiting the other in just one step is the reciprocal of the degree.

The above remark also implies in particular that if a graph satisfies condition \eqref{cond}, then hitting times are symmetric ---the \defi{hitting time} $H_{xy}$ from $x$ to $y$ is the expected value of the variable $X$ defined above--- that is, $H_{xy}=H_{yx}$ \fe\ $x,y\in V(G)$. 

In this note we show that \Prb{prAl} can be reduced to a purely graph-theoretic question which has long been known to have a negative answer. This yields a negative answer to \Prb{prAl}. A graph \g is called \defi{walk-regular}, if  \fe\ $n\in \N$, the number of closed walks in \g of length $n$ starting at a vertex $x$ is independent of the choice of $x$. 

\begin{observation}\label{ob}
A graph is walk-regular \iff\ it satisfies \eqref{cond}.
\end{observation}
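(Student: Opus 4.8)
The plan is to relate both conditions to powers of two matrices associated with \g: the adjacency matrix $A$, whose $(x,y)$ entry is $1$ if $xy\in E(G)$ and $0$ otherwise, and the transition matrix $M=D^{-1}A$ of simple \rw, where $D$ is the diagonal matrix of degrees. Two elementary identities drive everything. First, the number of closed walks of length $n$ in \g starting at $x$ is exactly $(A^n)_{xx}$, so that walk-regularity says precisely that $(A^n)_{xx}$ is independent of $x$ for every $n$. Second, $Pr_x(Z(n)=x)=(M^n)_{xx}$, so that condition \eqref{cond} says precisely that $(M^n)_{xx}$ is independent of $x$ for every $n$. Thus the \Or{ob} amounts to comparing the diagonal entries of the powers of $A$ and of $M$.

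First I would reduce to the case that \g is regular. For walk-regular graphs this is immediate: the case $n=2$ counts the closed walks $x\,v\,x$, so $(A^2)_{xx}=d(x)$, and walk-regularity forces $d(x)$ to be constant. For condition \eqref{cond} the regularity is exactly the fact recorded before the statement: \eqref{cond} fixes the whole sequence $\big((M^n)_{xx}\big)_{n}$, hence (through the first-return decomposition) the expected return time to $x$, which the discussion above identifies as $2m/d(x)$; so $d(x)$ is independent of $x$. Alternatively, one may note that $(M^n)_{xx}=(N^n)_{xx}$ for the symmetric matrix $N=D^{-1/2}AD^{-1/2}$, since $M=D^{-1/2}ND^{1/2}$ and conjugation by a diagonal matrix preserves diagonal entries; the Perron eigenvector of $N$ is $(\sqrt{d(v)})_v$, so the limit of $(M^{2n})_{xx}$ is a positive multiple of $d(x)$, and its being independent of $x$ again forces $d(x)$ to be constant.

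Once \g is regular with degree $d$, the two matrices are proportional, $M=\tfrac1d A$, so that $Pr_x(Z(n)=x)=(M^n)_{xx}=d^{-n}(A^n)_{xx}$. Dividing by the common factor $d^{-n}$ shows, for each fixed $n$, that $(M^n)_{xx}$ is independent of $x$ \iff\ $(A^n)_{xx}$ is; ranging over all $n$ yields that \eqref{cond} holds \iff\ \g is walk-regular.

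The only step requiring genuine care is deriving regularity from \eqref{cond}: unlike the walk-regular side, no single value of $n$ forces the degrees to agree, and one must appeal to the asymptotics of the return probabilities (equivalently, to the stationary distribution) as already used in the discussion preceding the statement. Everything else is the bookkeeping of the two diagonal-entry identities.
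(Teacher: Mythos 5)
Your proof is correct and follows essentially the same route as the paper: both arguments first reduce to the regular case (the paper derives regularity of walk-regular graphs from a small value of $n$ and invokes the known $2m/d(x)$ expected-return-time fact to get regularity from \eqref{cond}, just as you do) and then observe that in a $d$-regular graph $Pr_x(Z(n)=x)$ equals $d^{-n}$ times the number of closed walks of length $n$ at $x$, which is exactly your identity $(M^n)_{xx}=d^{-n}(A^n)_{xx}$ in matrix notation. Your alternative spectral derivation of regularity from \eqref{cond} via $N=D^{-1/2}AD^{-1/2}$ is a pleasant self-contained substitute for the paper's citation, but it does not change the structure of the argument.
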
 
% *** ---- *** 
\begin{proof} 	
Applying the definition of walk-regular for $n=1$ we see that every walk-regular graph is regular. Recall that any graph that satisfies \eqref{cond} must be regular too. Now note that in a $k$-regular graph, given a closed walk $W$ of length $n$, the probability that the first $n$ steps of a \rw\ coincide with $W$ is $k^{-n}$, and the probability to return to the starting vertex $x$ after $n$ steps is that number multiplied by the number of closed walks of length $n$ starting at $x$.
\end{proof}

This reduces  \Prb{prAl} to the question of whether every walk-regular graph is transitive. This is however not the case, as already observed by Godsil \& MacKay \cite{GoMaFea}: any distance-regular graph \cite{BrCoNe} is walk-regular, but not necessarily transitive; in fact, there are many distance-regular graphs that have a trivial automorphism group \cite{CamRan}; see also\\ {\it \small http://mathoverflow.net/questions/106589/is-every-distance-regular-graph-vertex-transitive}. Godsil \& McKay \cite{GoMaFea}\footnote{There is an error in the printed version; see the authors' website.} have constructed a walk-regular graph that is neither transitive nor distance-regular.

\section{Symmetry of hitting times}

Let us call a graph \g \defi{reversible} if  hitting times are symmetric, that is, if $H_{xy}=H_{yx}$ holds \fe\ $x,y\in V(G)$.
The aforementioned discussion implies that every walk-regular graph is reversible \cite{CTree}. This motivates the following

\begin{problem}
Is every reversible graph regular? If yes, is it even walk-regular?
\end{problem}
\note{I don't think so.}

I suspect that the answer is no. 

It is shown in \cite{CTree} that a graph is reversible \iff\ the sum $R_d(v):= \sum_{w\in V(G)} d(w) r(v,w)$ is independent of the choice of the vertex $v$, where $r(v,w)$ is the effective resistance between $v$ and $w$ when \g is considered as an electrical network (with unit resistances). In this case, we can think of $R_d(G)=R_d(v)$ as an invariant $R_d(G)$ of the graph. It is natural to consider the normalised version $R_\pi(G):= R_d(G)/2m$, where $m$ is the number of edges of $G$; note that $R_\pi(G)$ is the expected effective resistance between an arbitrary vertex and a randomly chosen vertex chosen by picking an edge uniformly at random and choosing each of its endvertices with probability a half. These numbers are always rational because $r(v,w)$ is the solution of a linear system with integer coefficients. This motivates the following problem
%, which can be thought of as a mixture of a graph-theoretic and number-theoretic problem.

\begin{problem}
Which (rational) numbers appear as $R_\pi(G)$ for some reversible graph \G? Are they dense in the positive reals?
\end{problem}

For the complete graphs $K_n$ an easy calculation yields $R_\pi(K_n)=2/n^2$. For the cycles $C_n$ we have $R_\pi(C_n)=\Theta(n)$. This shows that $R_\pi(G)$ can be arbitrarily small or large. It would be interesting to have constructions that combine simple reversible graphs into more complicated ones.

The fact that \rw\ on expander graphs has desirable properties (e.g.\ rapid mixing) \cite{LovRan} motivates
\begin{problem}
Construct reversible graphs that are good expanders.
\end{problem}

\bibliographystyle{plain}
\bibliography{../../collective}
\end{document}